\tikzset{
  no line/.style={draw=none,
    commutative diagrams/every label/.append style={/tikz/auto=false}},
  from/.style args={#1 to #2}{to path={(#1)--(#2)\tikztonodes}}}%
\tikzset{
    rotate90/.style={anchor=south, rotate=90, inner sep=.5mm}
}%
\numberwithin{equation}{subsection}%
\newtheorem{theorem}{Theorem}[section]
\theoremstyle{plain}
\newtheorem{corollary-definition}[theorem]{Corollary-Definition}
\newtheorem{definition}[theorem]{Definition}
\newtheorem{lemma}[theorem]{Lemma}
\newtheorem{proposition}[theorem]{Proposition}
\numberwithin{equation}{section}
\theoremstyle{definition}
\newtheorem{example}[theorem]{Example}
\newtheorem{remark}[theorem]{Remark}
\newcommand{\blank}{\hspace{0.04cm} \rule{2.4mm}{.4pt} \hspace{0.04cm} }
\newcommand{\blankd}{\hspace{0.04cm} \rule{1.5mm}{.4pt} \hspace{0.04cm} }
\DeclareMathOperator{\Ker}{\mathrm{Ker}}
\DeclareMathOperator{\Cok}{\mathrm{Coker}}
\DeclareMathOperator{\Hom}{\mathrm{Hom}}
\mathchardef\mhyphen="2D
\newcommand{\injt}{\mathfrak{s}}
\newcommand{\injc}{\mathfrak{q}}
\newcommand{\cat}[1] {\ensuremath{\mathbf{#1}}}
\newcommand{\ModL}{\mathrm{Mod}\mhyphen\Lambda}
\newcommand{\LMod}{\Lambda\mhyphen\mathrm{Mod}}
\newcommand{\ab}{\mathbf{Ab}}
\newcommand{\lra}{\longrightarrow}
\newcommand{\fp}{{\mathrm{fp}}(\LMod, \ab)}
\DeclareMathOperator{\yu}{\yen^{\bullet}}
\newcounter{hours}
\newcounter{minutes}
\newcounter{EquationCounter}[subsection]%
\begin{document}
\title[The defect, the Malgrange functor, and linear control systems]{The defect, the Malgrange functor, and linear control systems}
\author{Alex Martsinkovsky}
\address{Mathematics Department\\
Northeastern University\\
Boston, MA 02115, USA}
\email{a.martsinkovsky@northeastern.edu}

\date{\today, \setcounter{hours}{\time/60} \setcounter{minutes}{\time-\value
{hours}*60} \thehours\,h\ \theminutes\,min}
\subjclass[2010]{Primary: 18A25 ; Secondary: 16S90, 16D90, 18E40, 18E99, 93B05, 93B07, 93B25, 93B99, 93C05}

\thanks{Supported in part by the Shota Rustaveli National Science Foundation of Georgia Grant NFR-18-10849}

\dedicatory{In memory of Jeremy Russell}

\begin{abstract}

The notion of defect of a finitely presented functor on a module category is extended to arbitrary additive functors. The new defect and the contravariant Yoneda embedding form a right adjoint pair. The main result identifies the defect of the covariant Hom modulo projectives with the Bass torsion of the fixed argument. 
When applied to a linear control systems, it shows that the defect of the Malgrange functor of the system modulo projectives is isomorphic to the autonomy of the system.
Furthermore, the defect of the contravariant Hom modulo injectives is shown to be isomorphic to the cotorsion coradical of the fixed argument. Since the Auslander-Gruson-Jensen transform of cotorsion is isomorphic to torsion, the above results raise two important questions: a) what is a control-theoretic interpretation of the covariant Yoneda embedding of the Malgrange module modulo injectives, and b) what is a control-theoretic interpretation of the Auslander-Gruson-Jensen duality?
\end{abstract}

\maketitle
\tableofcontents

\section{Introduction}

\textbf{Blanket assumption}. All functors in this paper are assumed additive.
\smallskip

A general notion of torsion was introduced and studied in~\cite{MR-1, MR-2, MR-3}. Two aspects of that work deserve special mention. First, it is the utmost generality of that concept -- the definition works for arbitrary modules over arbitrary rings. For arbitrary modules over commutative domains it coincides with the classical torsion. For finitely presented modules over arbitrary rings it coincides with the Bass torsion, defined as the kernel of the canonical bidualization map. Secondly, the functorial techniques used in the definition of torsion allowed to introduce for the first time a new concept called cotorsion. It is worth mentioning that 116 years had passed from Poincar\'e's discovery of torsion (\cite[p.~33]{Dieu89}) to the introduction of cotorsion in 2016. The Auslander-Gruson-Jensen functor effects a duality between the two concepts~\cite{MR-2}.

In June 2017, in a private communication to the author, Mohamed Barakat pointed out  a connection between torsion and linear control systems (LCSs). To wit, thinking of an LCS as described by its Malgrange module (see below), one observes that its torsion submodule represents the autonomy of the system, i.e., the part of the system that cannot be controlled. It was this observation, already known to control theorists, that made this author a strong advocate of a functorial approach to the study of LCSs. 

Functor categories have several advantages over module categories. First, they have nice homological properties. Secondly, the category of modules over a ring can be embedded in a functor category via the so-called Yoneda embedding(s). Thirdly, the module category can be recovered from a functor category as a certain Serre quotient. These fundamental properties were established by M.~Auslander in his seminal paper~\cite{Aus66}. In that work he studied coherent, a.k.a. finitely presented, functors. Knowing a finite presentation of an additive functor allows for direct calculations because, in many cases, such calculations are reduced to repeated applications of the Yoneda lemma coupled with the standard exactness properties of the Hom functor. Roughly speaking, finite presentation allows to translate questions about functors back into the language of modules. This tool is completely missing when the functor is not finitely presented.

On the other hand, many naturally arising functors are not, in general, finitely presented. Examples include tensor products and their derived functors, various torsion functors and their derived functors (e.g., local cohomology), the covariant Hom functors modulo projectives, the contravariant Hom functors modulo injectives, and others. To remedy this deficiency, in this paper we introduce another ``bridge'' from  functors to modules, called the defect. For finitely presented functors this tool was already present in~\cite{Aus66}, but we were able to extend this notion to arbitrary additive functors. For a given functor, its defect is defined as the class (actually, a set) of all natural transformations from the functor to the forgetful functor. We also prove that the defect and the contravariant Yoneda embedding form a right adjoint pair.

For an additive contravariant functor, we define its defect dually as the set of all natural transformations from the forgetful functor to the functor in question. Of course, by the Yoneda lemma, this set is just the value of the functor on the regular module, and the definition remains the same as for finitely presented functors.

Significant progress in a functorial approach to the study of control systems was recently achieved by Sebastian Posur in~\cite{Pos23}, where he obtained a categorical framework for behaviors in algebraic system theory. 

The present paper  provides a functorial description of the autonomy, understood here as the  torsion submodule, showing that the autonomy is isomorphic to the defect of the (projectively) stabilized Malgrange functor, where the Malgrange functor is defined as the covariant functor represented by the Malgrange module.

Here is a brief outline of the paper. 

Section~\ref{S:Malg} deals with LCSs. We recall basic definitions and the Malgrange isomorphism.
\smallskip

In Section~\ref{S:fpfunctors} we recall basic facts about finitely presented functors. 
\smallskip

In Section~\ref{S:defect} the defect of an arbitrary additive functor is introduced. Proposition~\ref{L:op-Yoneda} shows that the Yoneda embedding and the defect form a right adjoint pair. Theorem~\ref{T:defhommodproj} computes the defect of a covariant Hom modulo projectives, which turns out, somewhat surprisingly, to be isomorphic to the Bass torsion $\mathfrak{t}$ of the fixed argument. The surprise is that, in this role, one would expect the better behaved torsion radical $\injt$, introduced in~\cite{MR-2}. However, in the case of current interest -- when the argument is finitely presented -- $\injt$ and $\mathfrak{t}$ coincide (see~\cite{MR-2} for more details).

Parallel to that, the defect of the contravariant Hom modulo injectives is isomorphic to the cotorsion $\injc$ of the fixed argument. It was shown in~\cite{MR-2} that, under the duality effected by the Auslander-Gruson-Jensen transform, $\injc$ is sent to~$\injt$. 

We recall from~\cite{MR-2} that $\injt$ is the largest subfunctor of~$\mathfrak{t}$ preserving filtered colimits. As we just remarked, it coincides with 
$\mathfrak{t}$ on finitely presented modules, but the two may differ on infinite modules. That~$\mathfrak{t}$ is a radical (in the sense of Stenstr\"{o}m) follows at once since it is the reject of the regular module or, equivalently, of the class of projectives. The torsion functor $\injt$ is a radical, too, but whether it is a reject of a suitable class is an open question. Moreover, the only known to the author proof 
that $\injt$ is a radical~\cite[Theorem~2.19]{MR-2} is based on the fact that the Bass torsion has this property. It appears that $\injt$ and $\mathfrak{t}$ are in a truly symbiotic relationship.
\smallskip

Section~\ref{S:open} contains some open questions on further possible connections with LCSs.
\smallskip

For the module-theoretic terminology used here, consult~\cite{AF92}.
\smallskip

\textbf{Acknowledgments}. I am grateful to Mohamed Barakat, Vakhtang Lomadze, Alban Quadrat, and Sebastian Posur for stimulating discussions and their patience while answering my numerous questions.

\section{The Malgrange functor of a linear control system}\label{S:Malg}

By an LCS we understand an underdetermined system of linear differential equations. Such a system can be written in the form
\begin{equation}\label{Eq:DE}
x'(t) = Ax(t) + Bu(t).
\end{equation}

Here the functions $x(t)$ are called the state variables, whereas the $u(t)$ are the so called input variables. The inputs $u(t)$ are free in the sense that they can be chosen arbitrarily. The symbols $A$ and $B$ stand for matrices with entries in a suitable ring of differential operators. We assume there are only finitely many equations and finitely many variables, making $A$ and $B$ finite.  The coefficients can be constant, polynomial, or analytic functions. To control the system~\eqref{Eq:DE} one needs output variables $y(t)$:
\begin{equation}
 y(t) = Cx(t) + Du(t)
\end{equation}
for some chosen matrices $C$ and $D$. The classical form of a control system is then given by 
\begin{equation}\label{D:system}
\begin{cases}
	x'(t) &= Ax(t) + Bu(t) 
\\
	y(t) &= Cx(t) + Eu(t)
 \end{cases}
\end{equation}
Strictly speaking, the second equation is not an intrinsic part of the system. Nevertheless, it is a common practice to refer to~\eqref{D:system} as ``the system''. Notice also that there is nothing to ``solve for" in the second equation -- this is just an explicit definition of the output.

Rewriting~\eqref{D:system} in the form $AX = 0$ (with $A$ here different from the $A$ in~\eqref{D:system}), we have that an LCS is just a system of linear equations in modules over a suitable ring $D$ of differential operators. It was a simple but important observation by Malgrange~\cite[3, $2^{\circ}$, p. 84]{Mal64}, sometimes referred to as the Malgrange isomorphism, that solutions of $AX = 0$ can be realized as homomorphisms from the module presented by the transpose $A^{T}$ of $A$. We want to rephrase this in a functorial language. 

Associated with the matrix $A$ above is the system $AX=0$ of linear equations in left $D$-modules. Given a left $D$-module $V$, the solution-set $S(V)$ of the system in $V$ is a subset of $V^{n}$, where $n$ is the number of variables in the system. This subset, as is easily seen, is also a subgroup of the underlying abelian group of 
$V^{n}$. (Since~$D$ need not be commutative, $S(V)$ is not, in general, a submodule.) As a result, we obtain an additive covariant functor $S$, the solution-set functor, from left $D$-modules to abelian groups. Let~$M$ be the left module with free presentation given by $A^{T}$. The Malgrange isomorphism is now the statement that the functor~$S$ is represented by~$M$:
\[
S \simeq (M, \blank).
\]
\begin{remark}
 In its simplest form, when $A = a \in D$  is a  1 x 1 matrix, and $D$ is a commutative ring, the Malgrange isomorphism is probably known to everybody:
 \[
 \big(D/(a), N\big) \simeq \{x \in N \mid ax = 0\},
  \]
  where $N$ is any $D$-module.
\end{remark}

We shall refer to the finitely presented left $D$-module with presentation 
matrix~$A^{T}$ as the Malgrange module of the system $AX=0$. The corresponding representable functor $(M, \blank)$ will be called the Malgrange functor of the system.

Recall that a control system is said to be controllable if any final stage can be reached, by choosing the inputs, from any initial state in finite time. Not every system is controllable, and the part of the system that cannot be controlled is known as the autonomy of the system. In terms of the Malgrange module $M$ of the system, the autonomy corresponds to the torsion submodule of $M$. This was originally observed by J.-F.~Pommaret in 1989 in the context of differential algebra. In 1991, M.~Fliess gave a module-theoretic formulation of this result. In this paper, we are interested in a functorial approach to the study of control systems, which brings about a natural question: can the autonomy be described in terms of the Malgrange functor associated with the system? The main result of this paper gives a positive answer to the algebraic reformulation of this question.  

\section{The defect of a finitely presented functor}\label{S:fpfunctors}

In this section we recall the definition and basic properties of finitely presented functors and of the category they form. In particular, we recall the definition of the defect of a finitely presented functor. The canonical reference for this material is~\cite{Aus66}.

\begin{definition}
 A functor $F$ is said to be finitely generated if there is a module $X$ and a natural transformation $\alpha : (X, \blank) \to F$ which is epic on each component.\footnote{The symbol $(X, \blank)$ denotes $\Hom(X, \blank)$.}
\end{definition}

\begin{proposition}\label{P:set}
 Let $F$ and $G$ be functors. If $F$ is finitely generated, then the class of all natural transformations $g : F \to G$, denoted by $\mathrm{Nat}(F, G)$, is a set. 
\end{proposition}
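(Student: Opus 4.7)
The plan is to exhibit an injection from $\mathrm{Nat}(F, G)$ into a known set, namely $G(X)$, where $X$ is the module witnessing finite generation of $F$. Since the injectivity of a class into a set forces the class to be a set, this will conclude the argument.

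First, I would invoke the hypothesis to fix a natural transformation $\alpha : (X, \blank) \to F$ that is epic on each component. Then, for every natural transformation $g : F \to G$, I consider the composite $g \circ \alpha : (X, \blank) \to G$, which produces a function
\[
\Phi : \mathrm{Nat}(F, G) \lra \mathrm{Nat}\bigl((X, \blank),\, G\bigr), \qquad g \longmapsto g \circ \alpha.
\]
The next step is to show that $\Phi$ is injective. Suppose $g_{1}, g_{2} \in \mathrm{Nat}(F, G)$ satisfy $g_{1} \circ \alpha = g_{2} \circ \alpha$. Evaluating at a module $Y$ gives $(g_{1})_{Y} \circ \alpha_{Y} = (g_{2})_{Y} \circ \alpha_{Y}$. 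Because $\alpha_{Y}$ is a (set-theoretic) epimorphism by hypothesis, it is right-cancellable, and therefore $(g_{1})_{Y} = (g_{2})_{Y}$ for every $Y$, i.e., $g_{1} = g_{2}$.

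Finally, the Yoneda lemma yields a bijection $\mathrm{Nat}((X, \blank),\, G) \simeq G(X)$, and $G(X)$ is, by definition of a functor into abelian groups, a set. Composing the injection $\Phi$ with this bijection embeds $\mathrm{Nat}(F, G)$ into $G(X)$, so $\mathrm{Nat}(F, G)$ is a set. The one point that requires a moment of care is the interpretation of ``epic on each component'' in the argument for injectivity: it must mean surjectivity of the underlying maps of abelian groups (equivalently, right-cancellability in $\mathbf{Ab}$), which is exactly how epimorphisms in $\mathbf{Ab}$ behave, so there is no real obstacle here.
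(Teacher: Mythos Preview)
Your argument is correct and follows exactly the same approach as the paper: precompose with the componentwise epimorphism $\alpha$ to obtain an injection into $\mathrm{Nat}((X,\blank),G)$, then apply the Yoneda lemma to identify the target with the set $G(X)$. You have simply spelled out the injectivity step in more detail than the paper does (and, incidentally, the paper contains a typo writing $F$ where it should write $G$ in the target of the injection and in the Yoneda identification).
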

\begin{proof}
 Let $\alpha : (X, \blank) \to F$ be a componentwise epimorphism. Then, the assignment $g \mapsto g\alpha$ yields an injection $\mathrm{Nat}(F, G) \to\mathrm{Nat}((X,\blank), F)$. By the Yoneda lemma, the latter is isomorphic to $F(X)$, which is a set. Hence $\mathrm{Nat}(F, G)$ is a set, too.
\end{proof}

\begin{definition}
 A functor $F$ is said to be finitely presented if there are modules~$X$ and~$Y$ and a homomorphism $f : Y \to X$ such that the sequence
\begin{equation}\label{Eq:fp}
 (X,\blank) \overset{(f, \blankd)}\lra (Y, \blank) \lra F \lra 0
\end{equation}
 is componentwise exact.
\end{definition}

Thus every finitely presented functor is finitely generated and, as a consequence of 
Proposition~\ref{P:set}, the totality of all finitely presented functors and natural transformations between them, denoted here by~$\fp$, is a category.

The notions of monomorphism, epimorphism, and isomorphism between functors are defined componentwise and coincide with the corresponding categorical definitions.

\begin{proposition}
$\fp$ has kernels and cokernels.
\end{proposition}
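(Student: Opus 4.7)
The plan is to reduce both closures to module-theoretic manipulations via the Yoneda lemma. Given $\phi: F \to G$ in $\fp$, I would choose finite presentations
\[
(X, \blank) \overset{(f, \blankd)}\lra (Y, \blank) \overset{\pi}\lra F \lra 0, \qquad (X', \blank) \overset{(f', \blankd)}\lra (Y', \blank) \overset{\pi'}\lra G \lra 0,
\]
and lift $\phi$ to a chain map between them, using that representables are projective in the ambient functor category (an immediate consequence of the Yoneda lemma). Concretely, $\phi\pi$ lifts through $\pi'$ to $\psi: (Y,\blank) \to (Y',\blank)$, and then $\psi \circ (f, \blankd)$ takes values in $\imr(f', \blankd) = \ker \pi'$, so by projectivity of $(X, \blank)$ and the epimorphism $(X', \blank) \twoheadrightarrow \imr(f', \blankd)$ it lifts further to $(X, \blank) \to (X', \blank)$. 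By Yoneda these lifts correspond to module maps $\tilde\psi: Y' \to Y$ and $X' \to X$.

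For the cokernel, a direct diagram chase identifies $\coker \phi$ with the cokernel of the natural transformation $(Y, \blank) \oplus (X', \blank) \to (Y', \blank)$ given by $\psi$ on the first summand and $(f', \blankd)$ on the second, yielding a finite presentation of $\coker\phi$ and hence closure under cokernels.

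For the kernel I would first prove a sublemma: the image of any natural transformation $\sigma: (A, \blank) \to (B, \blank)$ between representables is finitely presented. Writing $\sigma$ by Yoneda as a module map $s: B \to A$ and factoring $s = i \circ p$ with $p$ epi and $i$ mono, the left-exact sequence $0 \to (\coker i, M) \to (A, M) \to (\imr s, M)$ identifies $\imr \sigma$ with $\coker\bigl((\coker i, \blank) \to (A, \blank)\bigr)$, giving a finite presentation. Applying this in the case at hand, since $\phi\pi = \pi' \psi$, a map $g \in \ker(\phi\pi)_M \subseteq (Y, M)$ is characterized by the condition $g\tilde\psi = hf'$ for some $h: X' \to M$, so $\ker(\phi\pi)$ coincides with the image of the natural transformation $(P, \blank) \to (Y, \blank)$, where $P$ is the pushout of $\tilde\psi$ and $f'$ in $\LMod$. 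By the sublemma, $\ker(\phi\pi)$ is finitely presented. Finally, the short exact sequence
\[
0 \lra \imr(f, \blankd) \lra \ker(\phi\pi) \lra \ker \phi \lra 0,
\]
which holds because $\ker\pi = \imr(f, \blankd)$, exhibits $\ker\phi$ as the cokernel of the induced map $(X, \blank) \to \ker(\phi\pi)$. A finite presentation of $\ker\phi$ is then obtained by appending the relations coming from $(X, \blank)$ to a presentation of $\ker(\phi\pi)$.

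The main obstacle I anticipate is the kernel case. Cokernels fall out mechanically from the chain lift, whereas the kernel requires both the sublemma on images of maps between representables and the slightly delicate identification of $\ker(\phi\pi)$ as such an image via a pushout in $\LMod$. Once these ingredients are in place, the remaining bookkeeping with the snake lemma in the ambient functor category is routine.
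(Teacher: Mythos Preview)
Your argument is correct. Note, however, that the paper itself gives no proof here at all --- it simply cites Auslander's original papers --- so there is no in-paper argument to compare against directly.

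What you have written is essentially the classical approach found in those references: cokernels fall out mechanically from the chain lift, and the substance lies entirely in the kernel case. Your sublemma can be streamlined slightly: for $\sigma = (s,\blank) : (A,\blank) \to (B,\blank)$ one has directly $\ker\sigma = (\coker s,\blank)$, since $g:A\to M$ satisfies $gs=0$ iff $g$ factors through $\coker s$; hence $\imr\sigma \simeq \coker\bigl((\coker s,\blank)\hookrightarrow (A,\blank)\bigr)$ without invoking the epi--mono factorisation of $s$ (your $\coker i$ equals $\coker s$, so the conclusion is the same). The pushout identification of $\ker(\phi\pi)$ as the image of $(P,\blank)\to (Y,\blank)$ is correct and is a clean repackaging of the standard step, which in Auslander's treatment is typically organised via a $3\times 3$ diagram built from the lifted map of presentations together with the observation that kernels (and hence images) of maps between representables are again finitely presented.
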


\begin{proof}
See~\cite[Proposition~2.1]{Aus66}. For a more direct proof, see~\cite[Proposition~3.1]{Aus82}.
\end{proof}

\begin{proposition}
 $\fp$ is abelian.
\end{proposition}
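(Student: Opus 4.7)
The plan is to deduce that $\fp$ is abelian by combining the existence of kernels and cokernels (just established) with the fact that $\fp$ sits inside the abelian category of all additive functors.

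First, I would verify that $\fp$ is additive. The zero functor admits the trivial presentation $(0,\blank) \to (0,\blank) \to 0 \to 0$; the direct sum of two finitely presented functors inherits a presentation given by the direct sum of the presenting maps; and $\mathrm{Nat}(F,G)$ is an abelian group under pointwise addition with bilinear composition (it is a well-defined set by Proposition~\ref{P:set}). Thus $\fp$ is preadditive and has finite biproducts.

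Next, let $\mathcal{F}$ denote the category of all additive functors $\LMod \to \ab$. This category is abelian, with kernels, cokernels, monomorphisms, and epimorphisms all computed componentwise. I would then check that the kernels and cokernels provided by the preceding proposition, when viewed inside $\mathcal{F}$, are precisely the componentwise ones. This is essentially the content of the construction: a natural transformation is monic (resp.\ epic) in $\fp$ exactly when it is so componentwise, as noted just before, and Auslander's construction produces the kernel and cokernel as the componentwise ones together with an explicit finite presentation of each. Hence the inclusion $\fp \hookrightarrow \mathcal{F}$ preserves and reflects kernels and cokernels.

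Finally, for any morphism $\varphi : F \to G$ in $\fp$, set $K := \ker\varphi$ and $C := \coker\varphi$. The coimage $\coker(K \to F)$ and the image $\ker(G \to C)$ both lie in $\fp$, as cokernels and kernels of maps between fp functors, and they coincide with the corresponding constructions in $\mathcal{F}$ by the previous paragraph. Since $\mathcal{F}$ is abelian, the canonical map from the coimage to the image is an isomorphism in $\mathcal{F}$, hence in $\fp$. This verifies the last axiom of an abelian category. The main subtlety is ensuring that the image remains finitely presented; this comes for free because $\fp$ is closed under the kernels and cokernels that $\mathcal{F}$ assigns to morphisms between fp functors.
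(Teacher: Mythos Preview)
Your proposal is correct and takes essentially the same approach as the paper: the paper's one-line proof simply observes that the canonical morphism from the coimage to the image is an isomorphism because it is an isomorphism componentwise, which is exactly your argument that this map coincides with the one in the ambient abelian functor category $\mathcal{F}$. Your additional verifications of additivity and of the compatibility of kernels and cokernels with the inclusion $\fp \hookrightarrow \mathcal{F}$ make explicit what the paper leaves implicit.
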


\begin{proof}
 The canonical morphism from the coimage to the image is an isomorphism because it is an isomorphism componentwise.
\end{proof}

\begin{proposition}
 $\fp$ has enough projectives.
\end{proposition}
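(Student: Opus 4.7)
The plan is to identify an explicit class of projectives and use the definition of \emph{finitely presented} to show every object of $\fp$ is a quotient of one of them. The natural candidates, already present in the presentation sequence~\eqref{Eq:fp}, are the representable functors $(X, \blank)$ for $X$ a $\Lambda$-module.

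First I would verify that each representable $(X, \blank)$ is projective in $\fp$. By the Yoneda lemma, $\mathrm{Nat}\bigl((X, \blank), G\bigr) \simeq G(X)$ naturally in $G$, so the functor $\Hom_{\fp}\bigl((X, \blank), \blank\bigr)$ coincides with the evaluation functor at $X$. From the previous proposition, exactness in $\fp$ is defined componentwise, so evaluation at any module is exact. Hence $\Hom_{\fp}\bigl((X, \blank), \blank\bigr)$ is exact, which is precisely the statement that $(X, \blank)$ is projective.

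Second, I would use the defining sequence~\eqref{Eq:fp}. For any $F \in \fp$, a presentation $(X,\blank) \to (Y,\blank) \to F \to 0$ exhibits $F$ as a quotient of the representable $(Y,\blank)$, which is projective by the first step. This produces, for every object of $\fp$, a projective that surjects onto it, giving enough projectives.

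There is no real obstacle here; the entire content is the Yoneda lemma together with the componentwise nature of exactness in $\fp$. The one point that deserves care is making sure that the projectivity of $(X,\blank)$ is checked against epimorphisms in $\fp$ rather than just in the larger functor category, but since epimorphisms in $\fp$ coincide with componentwise epimorphisms (as noted just before the statement), the componentwise exactness of evaluation suffices.
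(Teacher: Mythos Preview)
Your proposal is correct and is exactly the approach taken in the paper, which simply records that the representables are projective by the Yoneda lemma; you have merely spelled out the details (Yoneda identifies $\Hom_{\fp}((X,\blank),-)$ with evaluation at $X$, which preserves componentwise epimorphisms) and then invoked the defining presentation~\eqref{Eq:fp}.
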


\begin{proof}
The projectives are precisely the representables, which can be seen by applying the Yoneda lemma.
\end{proof}

\begin{remark}
 The projectives in $\fp$ can also be characterized as the left-exact finitely presented functors. Indeed, the representables are of this form. On the other hand, a finitely presented functor clearly preserves arbitrary products. If it is left-exact, it also preserves finite limits. It follows that it preserves arbitrary limits, and by a theorem of Watts, it must be representable.
 \end{remark}                                                                                                                                                                                                                                                                                                                                                                                                                                                                                                                                                                                                                                                                                                                                                                                                                                                                                                                                                                         
 
 An important invariant of a finitely presented functor is its defect, a notion introduced  (without a name) by Auslander in~\cite[pp.~202 and 209]{Aus66}. We recall it now.
 
\begin{definition}
 Let $F$ be a covariant functor with presentation~\eqref{Eq:fp}. The module
 \[
 w(F) := \Ker f
 \]
 is called the defect of $F$.
\end{definition}

The following known result follows directly from this definition.

\begin{lemma}
The defect of a finitely presented functor is zero if and only if the functor vanishes on injectives. \qed
\end{lemma}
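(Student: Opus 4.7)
The plan is to compute $F(I)$ explicitly for an arbitrary injective module $I$ and identify it with $\Hom(w(F), I)$; the stated equivalence will then follow at once.

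First, I would evaluate the defining presentation $(X,\blank) \xrightarrow{(f,\blankd)} (Y,\blank) \to F \to 0$ at $I$, producing the exact sequence of abelian groups
\[
\Hom(X,I) \xrightarrow{f^{*}} \Hom(Y,I) \to F(I) \to 0,
\]
where $f^{*}$ is precomposition with $f\colon Y \to X$. Factor $f$ through its image as $Y \twoheadrightarrow \Imr f \hookrightarrow X$. Since $I$ is injective, the contravariant functor $\Hom(-,I)$ is exact. Applied to the inclusion $\Imr f \hookrightarrow X$, it shows that $\Hom(X,I) \twoheadrightarrow \Hom(\Imr f, I)$ is surjective; applied to the short exact sequence $0 \to \Ker f \to Y \twoheadrightarrow \Imr f \to 0$, it yields the short exact sequence
\[
0 \to \Hom(\Imr f, I) \to \Hom(Y,I) \to \Hom(\Ker f, I) \to 0.
\]
Splicing these together identifies the image of $f^{*}$ with $\Hom(\Imr f, I) \subseteq \Hom(Y,I)$ and therefore produces a natural isomorphism $F(I) \cong \Hom(\Ker f, I) = \Hom(w(F), I)$.

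Given this identification, the lemma is immediate. If $w(F) = 0$, then $F(I) = 0$ for every module $I$, in particular for every injective. Conversely, if $F$ vanishes on all injectives, then $\Hom(w(F), I) = 0$ for every injective $I$; taking $I$ to be the injective envelope of $w(F)$ (or any injective cogenerator), the canonical embedding $w(F) \hookrightarrow I$ is forced to be zero, whence $w(F) = 0$.

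The only point requiring care is the explicit identification of the image of $f^{*}$, which is precisely where injectivity of $I$ is used in an essential way; beyond that, everything is a direct unwinding of the finite presentation of $F$.
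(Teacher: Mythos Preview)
Your argument is correct and makes explicit what the paper leaves implicit (the paper gives no proof beyond the remark that the result ``follows directly from this definition''). The identification $F(I)\cong \Hom(w(F),I)$ for injective $I$ is exactly the computation one performs, and your use of an injective cogenerator for the converse is the standard way to finish.

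One small slip: you write ``If $w(F)=0$, then $F(I)=0$ for every module $I$, in particular for every injective.'' This is false in general. If $f\colon Y\hookrightarrow X$ is a non-split monomorphism, then $w(F)=\Ker f=0$, yet $F(Y)$ contains the nonzero class of $\mathrm{id}_Y$. Your identification $F(I)\cong\Hom(w(F),I)$ was established only for injective $I$ (injectivity is used essentially, as you yourself note), so the correct statement is simply ``$F(I)=0$ for every injective $I$.'' This does not affect the validity of the proof, since that is precisely what you need; just delete the phrase ``for every module $I$, in particular''.
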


The notion of defect for a contravariant functor is defined similarly~[ibid.] 
\begin{definition}
Let~$F$ be a contravariant functor with presentation 
\begin{equation}\label{Eq:fp-contra}
(\blank, Y) \overset{(\blankd, f)}\lra (\blank, X) \lra F \lra 0.
\end{equation}
The module $v(F) := \Cok f$ is called the defect of $F$.
\end{definition}

It is immediate from the definition that $v(F) \simeq F(\Lambda)$. The following lemma is now obvious.

\begin{lemma}
 The defect of a finitely presented contravariant functor is zero if and only if the functor vanishes on the regular module. \qed
\end{lemma}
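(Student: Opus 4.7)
The plan is essentially to unwind the two identifications already built into the definitions: $v(F) = \Cok f$ on one side and $F(\Lambda)$ on the other, and observe that they agree canonically.

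First I would evaluate the componentwise-exact presentation~\eqref{Eq:fp-contra} at the regular module $\Lambda$. This yields an exact sequence of abelian groups
\[
(\Lambda, Y) \overset{(\Lambda, f)}\lra (\Lambda, X) \lra F(\Lambda) \lra 0.
\]
Next I would invoke the canonical isomorphism $\Hom_{\Lambda}(\Lambda, M) \simeq M$, natural in $M$, to identify the map $(\Lambda, f)$ with $f \colon Y \to X$ itself. Under this identification, the exact sequence above becomes
\[
Y \overset{f}\lra X \lra F(\Lambda) \lra 0,
\]
so $F(\Lambda) \simeq \Cok f = v(F)$, which is precisely the remark preceding the lemma.

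From this isomorphism the equivalence is immediate: $v(F) = 0$ holds if and only if $F(\Lambda) = 0$, i.e., if and only if $F$ vanishes on the regular module. Since nothing beyond the Yoneda-type identification $\Hom_{\Lambda}(\Lambda, \blank) \simeq \mathrm{id}$ is required, there is no genuine obstacle; the only thing to be careful about is not to confuse the covariant case (where the defect is the \emph{kernel} of $f$ and detects vanishing on injectives) with the contravariant case treated here, where the defect is the \emph{cokernel} of $f$ and detects vanishing on $\Lambda$.
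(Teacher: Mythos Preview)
Your argument is correct and is exactly what the paper has in mind: the paper records the identification $v(F)\simeq F(\Lambda)$ immediately after the definition and then declares the lemma ``obvious'' (hence the \qed\ with no proof). You have simply spelled out that identification by evaluating the presentation at $\Lambda$ and using $\Hom_{\Lambda}(\Lambda,\blank)\simeq\mathrm{id}$, which is precisely the intended one-line justification.
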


\begin{remark}
 The first phenomenological study of the defect of a finitely presented functor was undertaken by Jeremy Russell in~\cite{Russ16}.
\end{remark}

\section{The defect of an arbitrary additive functor}\label{S:defect}

The goal of this section is to make the notion of defect applicable to arbitrary additive functors on $\LMod$, the category of left $\Lambda$-modules, with values in abelian groups. We remind the reader that Auslander's definition applies to finitely presented functors only.

Recall that the contravariant Yoneda embedding, denoted here by $\yu$, is a functor 
$\LMod \lra \fp$ which sends a module $A$ to the representable functor $(A, \blank)$. 
In particular, $\yu(\Lambda)$ is the forgetful functor $(\Lambda, \blank)$. We can now introduce a defect of an arbitrary additive functor

\begin{definition}
 Let $F : \LMod \lra \ab$ be an arbitrary additive covariant functor. Define the defect $w(F)$ of $F$ by setting 
 \[
 w(F) := (F, \yu(\Lambda)),
 \]
 where the right-hand side is the class of all natural transformations $F \to \yu(\Lambda)$.
 \end{definition}
 
 By Proposition~\ref{P:set}, if $F$ is finitely generated, then $w(F)$ is a set. However, 
since the forgetful functor is a biadjoint, the following theorem, due to G.~M.~Kelly~\cite[Theorem (1.5)]{Lin65}, shows that the finiteness assumption can be dropped.\footnote{I am grateful to Alex Sorokin for bringing this result to my attention.}

\begin{theorem}\label{T:Kelly}
 Let $F, G : \cat{L} \lra \cat{K}$ be two functors. The class $(F,G)$ of natural transformations from $F$ to $G$ constitutes a set provided either
\begin{enumerate}[label=\normalfont(\roman*)]
 \item  $\cat{K}$ has a generator and $F$ has a left adjoint, or
 \item  $\cat{K}$ has a cogenerator and $G$ has a right adjoint, or
 \item  $\cat{L}$ has a generator and $F$ has a right adjoint, or
 \item  $\cat{L}$ has a generator and $G$ has a left adjoint.
 \qed
\end{enumerate}

\end{theorem}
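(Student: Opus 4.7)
The plan is to embed $\mathrm{Nat}(F,G)$ injectively into a bona fide set in each of the four cases. The common strategy uses the (co)generator to cut the natural transformation down to a bounded piece of data, and the adjoint hypothesis to make one factor a representable functor, at which point the Yoneda lemma delivers a single hom-set.

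First I would handle case (i). Let $U$ be a generator of $\cat{K}$ and let $L\dashv F$. Faithfulness of $\cat{K}(U,-)$ gives an injection
$$\mathrm{Nat}(F,G)\;\hookrightarrow\;\mathrm{Nat}\bigl(\cat{K}(U,F(-)),\,\cat{K}(U,G(-))\bigr).$$
The adjunction produces the natural isomorphism $\cat{K}(U,F(-))\cong\cat{L}(L(U),-)$, so the source is representable and the Yoneda lemma identifies the right-hand class with the set $\cat{K}(U,G(L(U)))$. Case (ii) is formally dual: using a cogenerator $V\in\cat{K}$ and a right adjoint $R\dashv G$, one obtains the natural isomorphism $\cat{K}(G(-),V)\cong\cat{L}(-,R(V))$ and hence an injection of $\mathrm{Nat}(F,G)$ into the set $\cat{K}(F(R(V)),V)$.

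For case (iii), the mechanism is different. Because $F$ has a right adjoint, $F$ preserves colimits, hence sends the jointly epimorphic family $\{f:U\to X\}_{f\in\cat{L}(U,X)}$ (which is jointly epic exactly because $U$ generates $\cat{L}$) to a jointly epimorphic family $\{F(f)\}$ in $\cat{K}$. The naturality equation $\eta_X\circ F(f)=G(f)\circ\eta_U$ then uniquely determines $\eta_X$ from $\eta_U$, so the assignment $\eta\mapsto\eta_U$ injects $\mathrm{Nat}(F,G)$ into the hom-set $\cat{K}(F(U),G(U))$. Case (iv) I would handle by a variant of the same idea, using the left adjoint of $G$ to transport the probing argument (passing $\eta$ to its adjunct $\bar\eta:L\circ F\to\mathrm{id}_{\cat{L}}$, a natural transformation in $\cat{L}$ where the generator hypothesis directly applies); the precise dual formulation requires some care in tracking variance.

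The main obstacle I expect is not in cases (i) and (ii), which are essentially a Yoneda manipulation, but in (iii) and (iv): the key technical point is that an adjoint preserves the (co)jointly epimorphic family associated with a (co)generator, which is what allows detection of morphisms via the probing object to be upgraded into determination of $\eta$ by its component at that object. Verifying the correct adjoint direction in the dualisation for case (iv) is the point where I would be most careful.
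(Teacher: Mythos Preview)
The paper does not prove this theorem: it is quoted from the literature (attributed to Kelly, cited via Linton) and closed with a \qed. There is therefore no proof in the paper to compare your proposal against.

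On its own merits, your arguments for cases (i), (ii), and (iii) are correct. In (iii), the step ``$F$ preserves colimits, hence sends the jointly epimorphic family to a jointly epimorphic family'' is cleanest when argued directly from the adjunction rather than through colimit preservation: if $g,h\colon F(X)\to Z$ satisfy $g\circ F(f)=h\circ F(f)$ for every $f\colon U\to X$, then their transposes $\bar g,\bar h\colon X\to R(Z)$ satisfy $\bar g\circ f=\bar h\circ f$ for every such $f$, whence $\bar g=\bar h$ (as $U$ is a generator) and so $g=h$. This avoids any tacit appeal to the existence of coproducts in $\cat{L}$.

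Your treatment of (iv), however, has a real gap. Transposing to $\bar\eta\colon L\circ F\Rightarrow\mathrm{id}_{\cat{L}}$ is indeed a bijection, but the claim that ``the generator hypothesis directly applies'' is not substantiated: to invoke one of the cases already handled you would need $L\circ F$ to have a left adjoint (for case~(i)) or a right adjoint (for case~(iii)), and neither is given, since no adjoint hypothesis is placed on $F$. Nor is (iv) the formal dual of (iii): replacing $\cat{L},\cat{K}$ by their opposites turns (iii) into the statement ``$\cat{L}$ has a \emph{cogenerator} and $G$ has a left adjoint'', which is a different hypothesis from (iv). So your stated worry about ``tracking variance'' is exactly where the argument breaks down, and a genuinely separate idea is required to handle (iv).
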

Thus $w(F)$ is always a set. Notice also that the symbol $w$ is, in fact, an additive contravariant functor from the large category of additive functors $\ModL \to \ab$ to abelian groups. Moreover, the bimodule structure on $\Lambda$ gives rise to the structure of a left $\Lambda$-module on each component of the defect. In summary,
we have

\begin{lemma}\label{L:def-rep}
 The defect is an additive contravariant functor from the large functor category back to modules:
\[
w : (\LMod, \ab) \lra \LMod.
\]
It is represented by the forgetful functor $\yu(\Lambda)$ and, in particular, is 
left-exact.\footnote{It can be shown that the defect, viewed as a functor on finitely presented functors, is a biadjoint(\cite{RD18}). In particular, it is exact. However, in general, $w$ is not right-exact.} \qed
\end{lemma}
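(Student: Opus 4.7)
The plan is to read the assertions directly off the definition of $w$, since $w(F) = (F, \yu(\Lambda))$ is, by construction, the contravariant Hom of the large functor category $(\LMod, \ab)$ associated with the object $\yu(\Lambda)$. Three items need to be checked: (a) that this prescription is a bona fide additive contravariant functor with values in sets, (b) that those values refine to left $\Lambda$-modules, and (c) that the resulting functor is left-exact.

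For (a), given a morphism $\eta : F \to F'$ in $(\LMod, \ab)$, precomposition $\zeta \mapsto \zeta \circ \eta$ defines an additive map $w(F') \to w(F)$, contravariantly and additively in $\eta$. The only non-formal point is that each class $w(F)$ is a set; this is delivered by Theorem~\ref{T:Kelly} applied with its target taken to be $\yu(\Lambda)$, whose right adjoint $\Hom_{\Z}(\Lambda, \blank)$ and left adjoint $\Lambda \otimes_{\Z} \blank$ activate conditions (ii) and (iv) via the generator/cogenerator $\Z$ and $\Q/\Z$ in $\ab$ and the generator $\Lambda$ in $\LMod$. For (b), the bimodule structure on $\Lambda$ provides, for each $\lambda \in \Lambda$, an endomorphism of the left module $\Lambda$ via right multiplication, hence a natural endomorphism of $\yu(\Lambda)$ via the contravariant Yoneda embedding; postcomposition with these endomorphisms turns $w(F)$ into a left $\Lambda$-module, naturally in $F$, and the maps defined in (a) are automatically $\Lambda$-linear.

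For (c), representability by $\yu(\Lambda)$ is the very definition of $w$, so $w = \Hom_{(\LMod,\ab)}\bigl(\blank,\, \yu(\Lambda)\bigr)$, and left-exactness is the standard property that a contravariant Hom functor sends a componentwise right-exact sequence to a left-exact one: given $F' \to F \to F'' \to 0$ in $(\LMod, \ab)$, applying $w$ yields $0 \to w(F'') \to w(F) \to w(F')$ in $\LMod$. I do not anticipate any serious obstacle; the only genuinely subtle point is the set-theoretic size issue in (a), which is exactly what Kelly's theorem is there to resolve.
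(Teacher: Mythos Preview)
Your proposal is correct and matches the paper's own approach, which is simply the paragraph preceding the lemma together with the \qed: the paper notes that $w(F)$ is a set by Kelly's theorem (since the forgetful functor is a biadjoint), that the bimodule structure on $\Lambda$ supplies the left $\Lambda$-module structure, and that representability by $\yu(\Lambda)$ is the definition, whence left-exactness. Your only addition is spelling out which of Kelly's conditions apply and how the $\Lambda$-action arises via right multiplication, which is a faithful elaboration of exactly what the paper leaves implicit.
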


In preparation for the next proposition we sketch some basic properties of additive functors. Let ${\{F_{i} : \LMod \lra \ab\}}$ be a family of such functors. Define an additive functor $\prod F_{i}$ componentwise: $\big(\prod F_{i}\big) (X) := \prod \big(F_{i} (X)\big)$. Notice that the right-hand side is unambiguously defined as a product of abelian groups. Furthermore, the structure maps $ \pi_{j}(X) : \prod \big(F_{i} (X)\big) \lra F_{j} (X)$ give rise to natural transformations $ \pi_{j} : \prod F_{i} \lra F_{j}$. It is now a straightforward verification that $\prod F_{i}$ is a product of the $F_{i}$ with structure maps $\pi_{j}$. As an immediate consequence, we have

\begin{lemma}
 For any functor $F$ and any family ${\{F_{i}\}}$ of functors there is a canonical isomorphism $(F, \prod F_{i}) \cong \prod (F, F_{i})$. \qed
\end{lemma}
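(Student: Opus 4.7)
The plan is to deduce the isomorphism directly from the universal property of the product, which the preceding paragraph has effectively already established for $\prod F_i$ in the large category $(\LMod, \ab)$. So the lemma is nothing more than the standard representability statement ``maps into a product = products of maps,'' and the proof reduces to assembling the pieces.

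Concretely, I would define the forward map
\[
\Phi : \Big(F, \prod F_i\Big) \lra \prod (F, F_i), \qquad \eta \longmapsto (\pi_j \eta)_j,
\]
where $\pi_j : \prod F_i \to F_j$ is the natural transformation whose $X$-component is the $j$th structure map of the abelian-group product $\prod F_i(X)$. To build the inverse $\Psi$, start with a family $(\eta_j)_j$ of natural transformations $\eta_j : F \to F_j$. For each object $X$, the universal property of the product in $\ab$ produces a unique group homomorphism
\[
\Psi((\eta_j))(X) : F(X) \lra \prod F_i(X) = \Big(\prod F_i\Big)(X)
\]
satisfying $\pi_j(X) \circ \Psi((\eta_j))(X) = \eta_j(X)$ for all $j$. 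Naturality of $\Psi((\eta_j))$ in $X$ follows from the naturality of each $\eta_j$ together with the uniqueness clause in the universal property of products in $\ab$: both legs of the naturality square, when composed with each $\pi_j(X)$, yield the same map, so they must be equal.

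That $\Phi$ and $\Psi$ are mutually inverse is immediate from the same uniqueness clause: $\Phi\Psi = \mathrm{id}$ because $\pi_j \circ \Psi((\eta_j)) = \eta_j$ by construction, and $\Psi\Phi = \mathrm{id}$ because for any $\eta : F \to \prod F_i$ the natural transformation $\eta$ is the unique one whose composition with each $\pi_j$ equals $\pi_j \eta$. Both $\Phi$ and $\Psi$ are visibly additive in the natural transformations, so the bijection is an isomorphism of abelian groups, and its description in terms of the structure maps $\pi_j$ makes it canonical.

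There is no real obstacle here; the only point deserving care is the set-theoretic one that $(F, \prod F_i)$ and each $(F, F_i)$ are \emph{a priori} only classes of natural transformations. However, once one produces the bijection with $\prod (F, F_i)$, set-hood on one side transfers to the other, and in all cases of interest in this paper the relevant hom-classes are sets by Proposition~\ref{P:set} or Theorem~\ref{T:Kelly}, so no logical difficulty arises.
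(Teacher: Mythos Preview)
Your proof is correct and follows exactly the approach implicit in the paper: the lemma is stated with a \qed and no argument, since it is the standard consequence of the fact, established in the preceding paragraph, that the componentwise product is a categorical product of the $F_i$. You have simply spelled out in full the routine verification that the paper leaves to the reader.
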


The proof of the next lemma is also straightforward. 
\begin{lemma}
 The contravariant Yoneda embedding sends direct sums to direct products. In particular, $\yu(\coprod A_{i}) \simeq \prod \big(\yu(A_{i})\big)$, where the $A_{i}$ are $\Lambda$-modules.\footnote{This is a particular case of a more general observation: products in the large functor category can be defined and computed componentwise. While the proof of the lemma is very simple, the reader should not fall into a trap and claim that the lemma is completely trivial. Indeed, the product in the right-hand side is to be taken in the functor category!} \qed
\end{lemma}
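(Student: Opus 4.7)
The plan is to evaluate both functors pointwise and reduce to the universal property of the coproduct in $\LMod$. Fix a $\Lambda$-module $X$. Then
\[
\yu\big(\coprod A_i\big)(X) \;=\; \Hom\big(\coprod A_i,\, X\big),
\]
and by the universal property of the coproduct in $\LMod$, precomposition with the canonical injections $\iota_j : A_j \to \coprod A_i$ yields a canonical isomorphism
\[
\Hom\big(\coprod A_i,\, X\big) \;\xrightarrow{\ \sim\ }\; \prod \Hom(A_i,\, X).
\]
On the other hand, by the previous lemma (and the accompanying footnote that products in the large functor category are taken componentwise),
\[
\big(\prod \yu(A_i)\big)(X) \;=\; \prod \yu(A_i)(X) \;=\; \prod \Hom(A_i,\, X).
\]
So the two functors agree on objects via a canonical isomorphism $\eta_X$.

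What remains is to verify naturality of $\eta$ in $X$. Given a homomorphism $f : X \to Y$, one must check that the square formed by $\eta_X$, $\eta_Y$, $\Hom(\coprod A_i, f)$, and $\prod \Hom(A_i, f)$ commutes. Chasing an element $\varphi : \coprod A_i \to X$ around both ways gives, in each factor indexed by $j$, the composite $f \circ \varphi \circ \iota_j = f \circ (\varphi \circ \iota_j)$, which matches because postcomposition with $f$ commutes with precomposition with $\iota_j$. This yields the required natural isomorphism $\yu(\coprod A_i) \simeq \prod \yu(A_i)$.

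There is no real obstacle here; the only point meriting attention is the one flagged in the earlier footnote, namely that the product symbol on the right-hand side is interpreted in the large functor category rather than as a product of modules, and this is precisely what the componentwise construction of products of additive functors resolves.
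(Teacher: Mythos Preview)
Your argument is correct and is precisely the straightforward verification the paper has in mind; the paper itself offers no explicit proof beyond declaring the result immediate, so your componentwise comparison via the universal property of the coproduct together with the naturality check is exactly the expected elaboration.
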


We are now ready to prove  

\begin{proposition}\label{L:op-Yoneda}
 The defect and the contravariant Yoneda embedding form a right adjoint pair, i.e., for any additive covariant functor $F$ and any $\Lambda$-module~$A$, there is an isomorphism 
\begin{equation}\label{D:r-adj}
 (F, \yen^{\bullet}(A)) \simeq \big(A, w(F)\big).
\end{equation}
 natural in $F$ and $A$. 
\end{proposition}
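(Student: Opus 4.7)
The plan is to write down an explicit mutually inverse pair of maps between the two sides and then check that each has the required additional structure (naturality, $\Lambda$-linearity, functoriality in $F$ and $A$). In one direction, given a natural transformation $\eta : F \to \yen^{\bullet}(A) = (A, \blank)$ and an element $a \in A$, I define a natural transformation $\Phi(\eta)(a) : F \to \yen^{\bullet}(\Lambda)$ by
\[
\Phi(\eta)(a)_X(x) \;:=\; \eta_X(x)(a), \qquad x \in F(X),
\]
using the canonical identification of $\yen^{\bullet}(\Lambda)(X)$ with $X$. In the other direction, given a $\Lambda$-homomorphism $\varphi : A \to w(F)$, I define $\Psi(\varphi) : F \to (A,\blank)$ by
\[
\Psi(\varphi)_X(x)(a) \;:=\; \varphi(a)_X(x).
\]
Both formulas are forced by the form of the Yoneda lemma, and $\Phi \circ \Psi$ and $\Psi \circ \Phi$ are the identities essentially by inspection.

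Next, I would verify the three pieces of structure. First, for each fixed $a$, the assignment $X \mapsto \Phi(\eta)(a)_X$ is natural in $X$: given $f : X \to Y$, one compares $f(\eta_X(x)(a))$ with $\eta_Y(F(f)(x))(a)$ and invokes naturality of $\eta$ composed with evaluation at $a$. Second, the assignment $a \mapsto \Phi(\eta)(a)$ is $\Lambda$-linear; here one uses that each $\eta_X(x)$ is a $\Lambda$-module map out of $A$ together with the definition of the left $\Lambda$-action on $w(F)$, which comes from right multiplication on the regular bimodule $\Lambda$ (which, via the identification $\yen^{\bullet}(\Lambda)(X) \cong X$, recovers left multiplication on $X$). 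Dual checks for $\Psi$ show that each $\Psi(\varphi)_X(x)$ is a genuine $\Lambda$-linear map $A \to X$ and that the collection is natural in $X$.

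Finally, naturality of the bijection in $F$ and $A$ is automatic from the construction: a morphism $\xi : F' \to F$ of functors acts by precomposition on both sides, while a $\Lambda$-map $\alpha : A' \to A$ acts on the left side by postcomposition with $\yen^{\bullet}(\alpha) = (\alpha, \blank)$ and on the right side by precomposition with $\alpha$, and both formulas for $\Phi$ and $\Psi$ intertwine these actions on the nose. The only real bookkeeping obstacle is keeping straight the bimodule origin of the left $\Lambda$-module structure on $w(F)$ so that the equality $\Phi(\eta)(\lambda a) = \lambda\cdot\Phi(\eta)(a)$ is valid as an identity of natural transformations; this is why the target in the definition of the defect is taken to be $\yen^{\bullet}(\Lambda)$ rather than merely the underlying set functor. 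Once this identification is in place, the whole argument reduces to repeated applications of the Yoneda lemma in the spirit of Lemma~\ref{L:def-rep}.
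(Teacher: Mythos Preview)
Your argument is correct: the explicit maps $\Phi$ and $\Psi$ you wrote down are mutually inverse, and the verifications you outline (naturality in $X$, $\Lambda$-linearity in $a$, additivity in $x$, and functoriality in $F$ and $A$) all go through exactly as you indicate. This is essentially the ``parametrized Yoneda'' or evaluation/coevaluation argument, and it gives the isomorphism by direct construction.

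The paper, however, takes a genuinely different route. Rather than building explicit inverses, it fixes $F$ and regards $G(A) := (F, \yen^{\bullet}(A))$ as an additive contravariant functor of $A$. It then invokes three structural facts proved just before the proposition: (i) there is always a canonical comparison $\sigma : G \to (\blank, G(\Lambda))$ which is an isomorphism at $\Lambda$; (ii) $G$ converts direct sums to direct products (because $\yen^{\bullet}$ does and $\mathrm{Nat}$ into a product is a product), so $\sigma$ is an isomorphism on all projectives; (iii) $G$ is left-exact. From (ii) and (iii) one concludes that $\sigma$ identifies $(\blank, G(\Lambda))$ with the zeroth right-derived functor $R_{0}G$, and since $G$ is already left-exact, $\sigma$ is an isomorphism everywhere. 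What this buys is a conceptual explanation: the adjunction holds because any left-exact additive contravariant functor on $\LMod$ is representable by its value on $\Lambda$. Your approach, by contrast, is more elementary and self-contained---it needs no derived-functor language and no preliminary lemmas on products of functors---at the cost of being a bookkeeping exercise rather than revealing the representability principle behind the result.
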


\begin{proof}
 We are going to look at the left-hand side of the claimed isomorphism as the value of the additive contravariant functor $G:= \big(F, \yen^{\bullet}(\blank)\big)$ on $A$. This is a functor from $\LMod$ to (large) abelian groups.\footnote{Notice that, by Theorem~\ref{T:Kelly}, $G(\Lambda)$ is a small abelian group.} It is well-known (see, for example, the discussion preceding~\cite[Lemma~3.16]{MR-1}) that the canonical natural transformation from any additive contravariant functor to the result of the Yoneda embedding of its value on $\Lambda$ is an isomorphism on $\Lambda$. Thus, we have a canonical natural transformation $\sigma : G \to \big(\blank, G(\Lambda)\big)$, which evaluates to an isomorphism on~$\Lambda$. The discussion preceding this proposition shows that~$G$ converts direct sums to direct products, and therefore 
 $\sigma$ is an isomorphism on all projectives. Since the codomain of $\sigma$ is clearly left-exact, $\big(\blank, G(\Lambda)\big)$ is the zeroth right-derived functor $R_{0}G$ of~$G$.\footnote{The use of a subscript rather than a superscript is only for mnemonic purposes to indicate that~$G$ is a contravariant functor.} But, as is easily checked, $G$ is itself left-exact and therefore $\sigma$ is an isomorphism. The proof of the functoriality in each variable is straightforward.
\end{proof}

\begin{remark}
 The above proposition gives yet another proof that $w$ is left-exact. 
\end{remark}

\begin{remark}
 J.~Fisher Palmquist and D. Newell introduced a notion of the dual of an additive covariant functor~\cite[pp. 299-300]{FishNew71}. In our notation, the value $F^{\ast}(A)$ of the dual of~$F$ on $A$ is defined as $(F, \yen^{\bullet}(A))$.\footnote{The reader should notice that the $(\blank)^{\scaleto{\ast\mathstrut}{5pt}}$ operation flips the variance of the functor.}\textsuperscript{,}\footnote{Similarly, if $F$ is contravariant, $F^{\scaleto{\ast\mathstrut}{5pt}}(A)$ is defined as 
 $(F, \yen_{\scaleto{\bullet\mathstrut}{5pt}}(A))$, where 
 $\yen_{\scaleto{\bullet\mathstrut}{5pt}}$ denotes the covariant Yoneda embedding.} Thus the lemma says there is a functorial in $F$ isomorphism $F^{\ast} \simeq (\blank, w(F))$. In particular,  $w(F)$ represents~$F^{\ast}$.
\end{remark}

\begin{proposition}
For finitely presented functors the new defect is isomorphic to the defect defined by Auslander.
\end{proposition}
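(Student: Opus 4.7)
The plan is to apply the new defect functor $w$ to a finite presentation of $F$ and identify the resulting kernel with Auslander's defect $\Ker f$ via the Yoneda lemma.

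First, given a finitely presented $F$ with presentation
\[
(X,\blank) \overset{(f,\blankd)}\lra (Y,\blank) \lra F \lra 0
\]
that is componentwise exact, apply the contravariant functor $w = (\blank, \yu(\Lambda))$. By Lemma~\ref{L:def-rep}, $w$ is left-exact, so I obtain an exact sequence of $\Lambda$-modules
\[
0 \lra w(F) \lra w\big((Y,\blank)\big) \lra w\big((X,\blank)\big).
\]

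Next, I would evaluate $w$ on a representable. By definition, $w\big((Y,\blank)\big) = \big((Y,\blank), \yu(\Lambda)\big) = \mathrm{Nat}\big((Y,\blank), (\Lambda,\blank)\big)$, and the Yoneda lemma identifies this with $(\Lambda, \blank)(Y) = \Hom(\Lambda, Y) \simeq Y$ as left $\Lambda$-modules (the $\Lambda$-module structure coming from the left $\Lambda$-action on $\yu(\Lambda)$ agrees with the module structure on $Y$ because Yoneda's isomorphism is natural in the target and the two actions arise from the same bimodule structure on $\Lambda$). Similarly $w\big((X,\blank)\big) \simeq X$.

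It then remains to identify the induced map $Y \to X$ with $f$. By the naturality of the Yoneda isomorphism, the map $w\big((Y,\blank)\big) \to w\big((X,\blank)\big)$ obtained by precomposing with $(f, \blankd)$ corresponds under the above identifications to the map on modules induced by $(f,\blankd)$ via Yoneda, which is precisely $f : Y \to X$. Substituting back into the exact sequence gives $w(F) \simeq \Ker f$, which is exactly Auslander's defect. The isomorphism is manifestly natural in $F$ once one checks that a morphism of presentations induces compatible morphisms of kernels; this last verification is routine.

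The only subtle point is the careful tracking of the connecting map through Yoneda; everything else is a direct application of Lemma~\ref{L:def-rep} and the Yoneda lemma.
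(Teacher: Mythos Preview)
Your proposal is correct and follows essentially the same approach as the paper: mapping the presentation into $(\Lambda,\blank)$, using left-exactness of $w$ (equivalently, of the Hom functor it represents), and identifying the resulting kernel with $\Ker f$ via the Yoneda lemma. You have simply spelled out in more detail what the paper compresses into a single sentence.
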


\begin{proof}
Map the presentation~\eqref{Eq:fp} on page~\pageref{Eq:fp} into $(\Lambda, \blank)$ and use the Yoneda lemma together with the left-exactness of the Hom functor.
\end{proof}

We can now show that the just defined defect of the covariant Hom modulo projectives is isomorphic to the Bass torsion of the representing object. By the  Bass torsion $\mathfrak{t}(A)$ of a $\Lambda$-module $A$ we understand the kernel of the canonical evaluation map $A \to A^{\ast\ast}$. It consists of all elements of $A$ on which all linear forms $A \to \Lambda$ vanish.

\begin{theorem}\label{T:defhommodproj}
 Let $\Lambda$ be an arbitrary ring. For any left $\Lambda$-module $A$, there is  a  canonical isomorphism 
 \[
 \varepsilon : w(\underline{A, \blank}) \overset{\simeq}{\lra} \mathfrak{t}(A),
 \]
 functorial in $A$.
\end{theorem}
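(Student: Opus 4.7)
The plan is to combine a Yoneda-style calculation with the characterization of projective modules as direct summands of free modules.

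By definition, $w(\underline{A,\blank})$ is the abelian group of natural transformations from the stabilized functor $\underline{A,\blank}$ to the forgetful functor $\yu(\Lambda)=(\Lambda,\blank)$. Let $\pi_{A}:(A,\blank)\to\underline{A,\blank}$ be the canonical componentwise epimorphism, and for each $B$ let $P(A,B)\subseteq(A,B)$ denote the subgroup of morphisms factoring through some projective module, so that $P(A,\blank)=\Ker\pi_{A}$. Precomposition with $\pi_{A}$ yields an injection
\[
(\underline{A,\blank},\yu(\Lambda))\hookrightarrow((A,\blank),\yu(\Lambda)),
\]
whose image consists precisely of those natural transformations that vanish on $P(A,\blank)$. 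By the Yoneda lemma, $((A,\blank),\yu(\Lambda))\simeq(\Lambda,A)\simeq A$, where $a\in A$ corresponds to the natural transformation $\eta_{a}$ with components $\eta_{a}(B)(f)=f(a)$.

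Next, I would show that $\eta_{a}$ kills $P(A,\blank)$ if and only if $a\in\mathfrak{t}(A)$. The forward direction is immediate by evaluation at $B=\Lambda$: since $\Lambda$ is projective, $P(A,\Lambda)=(A,\Lambda)$, so every linear form $\varphi\in A^{\ast}$ must satisfy $\varphi(a)=0$. For the converse, suppose $a\in\mathfrak{t}(A)$ and $f=gh$ with $h:A\to P$, $P$ projective, and $g:P\to B$; choose a split embedding $\iota:P\hookrightarrow\Lambda^{(I)}$ with retract $r:\Lambda^{(I)}\to P$. Each coordinate projection composed with $\iota h$ is a linear form on $A$, hence annihilates $a$, so $(\iota h)(a)=0$, and therefore $h(a)=r(\iota h(a))=0$ and $f(a)=g(h(a))=0$.

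Composing these identifications yields the desired isomorphism $\varepsilon_{A}:w(\underline{A,\blank})\xrightarrow{\sim}\mathfrak{t}(A)$. For naturality in $A$, a morphism $\varphi:A\to A'$ induces $\varphi^{\ast}:(A',\blank)\to(A,\blank)$, which descends to $\underline{\varphi^{\ast}}:\underline{A',\blank}\to\underline{A,\blank}$ (since a map factoring through a projective remains so after precomposition with $\varphi$), and by the contravariance of $w$ in its functor argument, a direct unwinding shows that an $\eta_{a}$ corresponding to $a\in\mathfrak{t}(A)$ is sent to the $\eta_{\varphi(a)}$ corresponding to $\varphi(a)\in\mathfrak{t}(A')$, which matches the action of $\mathfrak{t}$ on morphisms.

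The main obstacle is the converse in the torsion criterion, but it dissolves upon invoking the standard embedding of a projective into a free module; the remaining steps are formal manipulations with the Yoneda lemma and the universal property of the quotient $\underline{A,\blank}$.
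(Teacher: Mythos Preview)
Your proof is correct and follows essentially the same route as the paper: both identify $w(\underline{A,\blank})$ with a subset of $((A,\blank),(\Lambda,\blank))\simeq A$ via Yoneda, then argue that the relevant natural transformation descends to $\underline{A,\blank}$ precisely when the corresponding element lies in $\mathfrak{t}(A)$. The only notable difference is that you spell out the converse direction (vanishing on $P(A,\blank)$ for arbitrary targets) by explicitly embedding a projective into a free module, whereas the paper simply asserts ``since any linear form $A\to\Lambda$ vanishes on $b$, we have that $\beta\iota=0$'' and leaves that step to the reader; your version supplies the missing detail.
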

 
\begin{proof}
 Start with the defining exact sequence 
\begin{equation}\label{L:st}
0 \lra P(A, \blank) \overset{\iota}{\lra} (A, \blank) \overset{\pi} \lra (\underline{A, \blank}) \lra 0,
 \end{equation}
 where $P(A, \blank)$ denotes the subfunctor of $(A, \blank)$ consisting of all maps factoring through projectives, and pick any $\alpha \in \big((\underline{A, \blank}), \yu(\Lambda)\big)$. By the Yoneda lemma, $\alpha \pi = (f_{\alpha}, \blank)$ for some uniquely determined $f_{\alpha} : \Lambda \to A$. Since $\Lambda$ is projective, 
 $\pi_{\Lambda} = 0$  and therefore $(f_{\alpha}, \Lambda) = 0$. This means that 
 $h f_{\alpha} = 0$ for any linear form $h : A \to \Lambda$. In particular, 
 $h f_{\alpha}(1) = 0$, i.e., any linear form vanishes on $f_{\alpha}(1)$, and therefore $f_{\alpha}(1) \in \mathfrak{t}(A)$. Thus, we have a well-defined canonical 
 $\Lambda$-linear map 
\[
\varepsilon : \big((\underline{A, \blank}), \yu(\Lambda)\big) \lra \mathfrak{t}(A) : \alpha \mapsto f_{\alpha}(1).
\]
If $f_{\alpha}(1) = 0$ then $f_{\alpha} = 0$ and therefore $\alpha\pi = 0$. Since $\pi$  is epic, this shows that $\alpha =0$ and therefore $\varepsilon$ is injective.

To show that $\varepsilon$ is surjective, pick an arbitrary $b \in \mathfrak{t}(A)$ and define a homomorphism $f_{b} : \Lambda \to A $ of left $\Lambda$-modules by 
 sending $ \lambda \mapsto \lambda b$. This map gives rise to a natural transformation $\beta : = (f_{b}, \blank) : (A, \blank) \to (\Lambda, \blank)$. Since any linear form $A \to \Lambda$ vanishes on $b$, we have that $\beta \iota = 0$ 
and~$\beta$ must be of the form $\alpha \pi$ for some $\alpha \in \big((\underline{A, \blank}), \yu(\Lambda)\big)$. The foregoing arguments now show that $\varepsilon(\alpha) = b$, proving that $\varepsilon$ is surjective.

Verification of the functoriality of $\varepsilon$ is straightforward.
\end{proof}

\begin{remark}
 The defect of the covariant Hom modulo injectives is 0. Indeed, a presentation of  
$(\overline{A, \blank})$ is determined by a short exact sequence $0 \to A \overset{f}\to I \to \Sigma A \to 0$, and therefore $w(\overline{A, \blank}) = \Ker f =0$.
\end{remark}

For the sake of completeness, we briefly discuss the contravariant case. Recall that the defect of the finitely presented contravariant functor $F$ with presentation~\eqref{Eq:fp-contra} was defined as $v(F) := \Cok f = F(\Lambda)$. This makes extending the covariant defect to arbitrary additive functors obvious.

\begin{definition}
 Let $F$ be an additive contravariant functor. The defect $v(F)$ is defined by setting $v(F) := F(\Lambda)$.
\end{definition}

It is immediate that $v$ is an exact covariant functor from contravariant additive functors back to modules. Straight from the definition we have $v(F)$ is zero if and only if $F$ vanishes on the regular module. 
\begin{example}
Trivially, $v(\underline{\blank, A}) \simeq 0$.
\end{example}

\begin{remark}
The reader may wonder why the definition of $v$ looks so different from that of $w$. In fact, the two definitions are similar: by the Yoneda lemma, 
$F(\Lambda) \simeq \big(\yen_{\bullet}(\Lambda), F\big)$, where $\yen_{\bullet}$ stands for the covariant Yoneda embedding. Thus the definition of $v$ can also be stated as
\[
v(F) = \big(\yen_{\bullet}(\Lambda), F\big).
\]
\end{remark}

\begin{example}
 A natural companion for the covariant Hom modulo projectives is the contravariant Hom modulo injectives, $(\overline{\blank, A})$. In general, this functor is not finitely presented. Computing its defect, we have
\[
v(\overline{\blank, A}) \simeq  (\overline{\Lambda, A}) = \injc(A),
\]
where $\injc$ is the cotorsion coradical introduced in~\cite{MR-2}. Thus the defect of the contravariant Hom modulo injectives is isomorphic to the cotorsion of the fixed argument. 
\end{example}

Interpreting the foregoing results in the context of linear control systems (see Section~\ref{S:Malg}), we have
\begin{theorem}
 The autonomy of a linear control system is isomorphic to the defect of its Malgrange functor modulo projectives. \qed
\end{theorem}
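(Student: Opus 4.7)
The plan is to observe that this theorem is essentially a repackaging of Theorem~\ref{T:defhommodproj} applied to the Malgrange module, together with the control-theoretic dictionary established in Section~\ref{S:Malg}. So the proof will proceed by chaining three identifications, none of which requires new ideas.

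First, I would unpack the terms on the right-hand side. By the definition given at the end of Section~\ref{S:Malg}, the Malgrange functor of the LCS $AX = 0$ is the representable functor $(M, \blank)$, where $M$ is the finitely presented left $D$-module with presentation matrix $A^{T}$. The ``Malgrange functor modulo projectives'' is then the projectively stabilized functor $(\underline{M, \blank})$. Applying Theorem~\ref{T:defhommodproj} with $\Lambda = D$ and $A = M$ immediately yields a canonical isomorphism
\[
 w(\underline{M, \blank}) \overset{\simeq}{\lra} \mathfrak{t}(M).
\]

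Second, I would recall the classical interpretation of autonomy that is cited in Section~\ref{S:Malg}: by the Pommaret--Fliess observation, the autonomy of the LCS corresponds to the torsion submodule of its Malgrange module $M$. Since $M$ is finitely presented, the remark at the end of the introduction guarantees that Bass torsion $\mathfrak{t}(M)$ and the radical $\injt(M)$ coincide, so there is no ambiguity in which torsion submodule is meant. Composing this identification with the isomorphism from the previous step produces the desired canonical isomorphism between the autonomy and $w(\underline{M, \blank})$.

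There is essentially no obstacle here beyond bookkeeping: the work has already been done in Theorem~\ref{T:defhommodproj}, and the only thing to check is that the control-theoretic notion of autonomy matches the Bass torsion of $M$, which is stated in Section~\ref{S:Malg} as an already established fact from the differential-algebraic literature. If I wanted to make the statement fully self-contained I could add a brief sentence pointing out that finite presentation of $M$ is what allows interchanging $\injt$ and $\mathfrak{t}$, but otherwise the proof reduces to citing Theorem~\ref{T:defhommodproj} and closing with \qed.
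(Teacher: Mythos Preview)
Your proposal is correct and matches the paper's approach exactly: the paper gives no proof body at all (just \qed), treating the result as an immediate specialization of Theorem~\ref{T:defhommodproj} to the Malgrange module together with the identification of autonomy with torsion from Section~\ref{S:Malg}. Your write-up simply makes explicit the two steps the paper leaves implicit, and the remark about $\injt$ versus $\mathfrak{t}$ on finitely presented modules is a reasonable (though optional) clarification.
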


\section{Concluding remarks and open questions}\label{S:open}

In the previous section we saw that the defect of $(\underline{A, \blank})$ is isomorphic the Bass torsion $\mathfrak{t}(A)$ of $A$. If $A$ is finitely presented, then 
$\mathfrak{t}(A)$ coincides with $\injt(A)$ (\cite[Proposition 2.13]{MR-2}), where 
$\injt$ is the torsion radical introduced in [ibid.]. Thus, assuming that a control system is represented by finitely many equations in finitely many unknowns, both $\injt$ and 
$\mathfrak{t}$ represent the autonomy. This brings about 

\textbf{Question A}. Which torsion radical, if any, $\injt$ or $\mathfrak{t}$, represents the autonomy of a general system?
\medskip

It was shown in~\cite{MR-2} that $\injt$ is the largest subfunctor of $\mathfrak{t}$ preserving filtered colimits. This property prevents the pathological behavior of torsion 
responsible for the fact that the Bass torsion of the additive group of the rationals coincides with that group. This yields

\textbf{Question B}. Is it possible to view the autonomy as a subfunctor of the identity functor on a suitable category? If so, does it preserve filtered colimits?
\medskip

Recall that the Auslander-Gruson-Jensen (AGJ) transform of an additive functor~$F$ is defined by the formula $DF(N) : = (F, N \otimes \blank)$ (or $(F, \blank \otimes N)$). This is an additive functor on modules over the opposite ring. It was shown in~\cite{MR-2}, that the AGJ transform of the cotorsion $\injc = (\overline{\Lambda, \blank})$ is isomorphic to $\injt$. It is yet another advantage of $\injt$ over the Bass torsion. Returning to control systems, we have 

\textbf{Question C}. Given a control system with Malgrange module $M$, what is the control-theoretic interpretation of the cotorsion $(\overline{\Lambda, M})$ of the Malgrange module? Recall that the cotorsion of a module is simply its quotient by the trace of the class of injectives.
\medskip

For certain classes of LCSs, one has a duality between the controllability of a system and the observability of the dual system (see~\cite{Fuhr81, NW88, vdS91, Rud96, L21}).  The module-theoretic and functorial analogs of the autonomy discussed above beg for the next question.

\textbf{Question D}. Is there a control-theoretic interpretation of the Auslander-Gruson-Jensen duality?

\end{document}